\documentclass[letterpaper, 10 pt, conference]{ieeeconf}
\usepackage{graphicx}
\usepackage{float}
\usepackage{amsfonts}
\usepackage{comment}
\usepackage{amsmath}
\usepackage{placeins}
\let\labelindent\relax
\usepackage{enumitem}
\usepackage{bm}
\usepackage{hyperref}
\usepackage{xcolor}
\usepackage[noadjust]{cite}
\usepackage{amssymb}
\usepackage{algorithmic}
\usepackage{textcomp}
\usepackage{cleveref}
\usepackage{lipsum}
\usepackage{titlesec}
\renewcommand{\baselinestretch}{.99}

\IEEEoverridecommandlockouts

\DeclareMathOperator{\sgn}{sgn}
\DeclareMathOperator{\tr}{tr}
\DeclareMathOperator{\vol}{vol}
\DeclareMathOperator{\spann}{span}
\DeclareMathOperator{\vect}{vec}
\DeclareMathOperator{\imageoffunction}{image}
\DeclareMathOperator{\convhull}{conv}
\DeclareMathOperator{\exposed}{exp}
\DeclareMathOperator{\closure}{cl}
\newcommand{\theset}[1]{\left\{ #1 \right\} }
\newcommand{\setst}{\;\vert\;}

\newcommand{\blambda}{\bm{\lambda}}  
\newcommand{\bxigreek}{\bm{\xi}}   
\newcommand{\bx}{\mathbf{x}}
\newcommand{\bxi}[1][i]{\mathbf{x}_{#1}}
\newcommand{\bu}{\mathbf{u}}
\newcommand{\bPhi}{\mathbf{\Phi}}

\newcommand{\bpsi}{\bm{\psi}}
\newcommand{\bolds}[1]{\mathbf{#1}}
\newcommand{\bA}{\mathbf{A}}
\newcommand{\bB}{\mathbf{B}}
\newcommand{\reachgram}[2][]{\mathbf{W}_{R}^{#1}(0,#2)}

\newcommand{\ol}[1]{\overline{#1}}
\newcommand{\ul}[1]{\underline{#1}}

\newcommand{\R}{\mathbb{R}}
\newcommand{\N}{\mathbb{N}}
\newcommand{\U}{\mathbb{U}}

\newcommand{\calU}{\mathcal{U}}
\newcommand{\calG}{\mathcal{G}}

\newcommand{\rd}{\mathop{}\!\mathrm{d}}

\newcommand{\pder}[2][]{\frac{\partial#1}{\partial#2}}
\newcommand{\Lp}[1][p]{\mathcal{L}^{#1}}

\newcommand{\Lpnorm}[2][p]{\| #2 \|_{\mathcal{L}^{#1}}}

\newcommand{\reachsets}[2][\calU_{\ul{\bu},\ol{\bu}}]{\mathcal{R}(#2;#1)}

\newcommand{\reachsetsExp}[3][\calU_{\ul{\bu},\ol{\bu}}]{\mathcal{R}^{#3}(#2;#1)}

\newtheorem{proposition}{Proposition}
\crefname{proposition}{Proposition}{Propositions}
\Crefname{proposition}{Proposition}{Propositions}

\newtheorem{definition}{Definition}
\newtheorem{theorem}{Theorem}
\crefname{theorem}{Theorem}{Theorems}
\Crefname{theorem}{Theorem}{Theorems}

\newtheorem{remark}{Remark}
\newtheorem{lemma}[theorem]{Lemma}
\crefname{lemma}{Lemma}{Lemmas} 
\Crefname{lemma}{Lemma}{Lemmas}

\newtheorem{example}{Example}
\crefname{example}{Example}{Examples}

\crefname{equation}{}{}    
\crefname{enumi}{}{}     
\crefname{figure}{Fig.}{Figs.}

\begin{document}

\bstctlcite{IEEEexample:BSTcontrol}

\title{Reachability Analysis for Design Optimization\\
\thanks{The authors would like to acknowledge the Multidisciplinary Science and Technology Center of the Aerospace Systems Directorate, Air Force Research Laboratory for funding and supporting this effort through the Collaborative Center for Design
and Research of Interdisciplinary Systems program.}
\thanks{Distribution A: Approved for public release; distribution unlimited.  Case no. AFRL-2025-4916.}
}

\author{Steven Nguyen \quad Jorge Cort\'es \quad Boris Kramer}

\maketitle

\begin{abstract}
    We present an approach to approximate reachable sets for linear systems with bounded $\mathcal{L}^\infty$ controls in finite time.
    Our first approach investigates the boundaries of these sets and reveals an exact characterization for single-input, planar systems with real, distinct eigenvalues.
    The second approach leverages convergence of the $\mathcal{L}^p$-norms to $\mathcal{L}^\infty$ and uses $\mathcal{L}^p$-norm reachable sets as an approximation of the $\mathcal{L}^\infty$-norm reachable sets.
    Our optimal control results yield insights that make computational approximations of the $\mathcal{L}^p$-norm reachable sets more tractable, and yield exact characterizations for $\mathcal{L}^\infty$ with the previous assumptions on the system.
    As an example, we incorporate our reachability analysis into the design optimization of a highly-maneuverable aircraft.
    Introducing constraints based on reachability allow us to factor physical limitations to desired flight maneuvers into the design process.
\end{abstract}

\maketitle

\section{Introduction}

\subsection{Control concepts in aircraft design}
In the design of next-generation aircraft, control considerations are critically important to ensuring that new air platforms have the capabilities to execute their maneuvers.
However, advanced modern control concepts are rarely considered during design optimization and are instead often reserved for later stages of analysis.
This is a common tradeoff that simplifies the design procedure at the expense of aircraft designs that are optimal for their missions.
We propose to incorporate reachability analysis into aircraft design optimization to enforce maneuverability and controlled capabilities of aircraft.
Metrics based on reachability enable a controller-agnostic investigation of aircraft capabilities that do not constrain subsequent construction of feedback controllers, making it an appealing approach for aircraft design.

\subsection{Background/literature review}
Interconnections between subsystems in design optimization have been studied since the late 20th century within the field of multidisciplinary design optimization (MDO)~\cite{martins_MultidisciplinaryDesignOptimization_2013}.
Since MDO was first proposed, aircraft systems have been popular subjects for study due to their interdisciplinary nature, coupling structural, aerodynamic, and control effects.
In the 1980s and 1990s, researchers applied MDO to aircraft wing optimization~\cite{livne_IntegratedMultidisciplinarySynthesis_1990,grossman_IntegratedAerodynamicStructural_1988}, and, with later advancements in computational power, began studying optimization of full aircraft designs~\cite{manning1999large}.
Furthermore, control co-design, the procedure of optimizing a structure with control metrics in consideration~\cite{garcia-sanz_ControlCoDesignEngineering_2019}, has been known to yield improvements in controlled performance~\cite{skelton_OptimalMixStructure_1992,gulewicz_SetBasedRobustControl_2025}.

In the field of aircraft design, considerations of control within MDO vary widely~\cite{bahiamonteiro_ControlRelatedMetrics_2024}.
Some approaches fix the control architecture and optimize a controller simultaneously with design~\cite{livne_IntegratedMultidisciplinarySynthesis_1990,khot_MulticriteriaOptimizationDesign_1998}.
Although this approach simplifies the choice of design metric, the engineer must decide early in the design stage what type of controller will be used on the final system, risking suboptimality if the control scheme changes later.
Alternatively, control-agnostic metrics for design optimization have been explored recently, ranging from frequency-domain metrics for robustness~\cite{bahiamonteiro_DesignMetricsLanding_2024} to state-space metrics based on optimal control~\cite{gupta_ControllabilityGramianControl_2020,cunis_IntegratingNonlinearControllability_2023}.
These methods have the advantage of characterizing the behavior of the aircraft with fewer assumptions on the control synthesis technique, but are more complex to formulate.
We build on these investigations of control-metrics for MDO by proposing the use of metrics based on reachability.

Reachability analysis studies the set of states a dynamical system can reach, and is often used for safety verification.
Many approaches for studying reachability use set propagation techniques to compute under- and over-approximations of reachable sets of linear, nonlinear, and hybrid systems~\cite{althoff_SetPropagationTechniques_2021,maler_ComputingReachableSets_2008}.
For linear systems, convexity of reachable sets make certain classes of subsets of Euclidean space particularly desirable to use. 
Depending on desired properties such as closure under linear transformations, Minkowski sums, or intersections, engineers may consider using ellipses~\cite{kurzhanski_EllipsoidalTechniquesReachability_2000} or polytopes~\cite{chutinan_ComputingPolyhedralApproximations_1998}.
Numerous computational packages exist to handle these computations, like SpaceEx~\cite{frehse_SpaceExScalableVerification_2011} and CORA~\cite{althoff_IntroductionCORA2015_2015}.
In the nonlinear setting, reachable sets are rarely convex and formal analysis typically requires solving a Hamilton-Jacobi partial differential equation~\cite{bansal_HamiltonJacobiReachabilityBrief_2017}.
To circumvent the computational complexity of nonlinear reachability, many approximation approaches exist, like hybrid zonotopic~\cite{siefert_ReachabilityAnalysisUsing_2025}, learning~\cite{bansal_DeepReachDeepLearning_2021}, and sampling~\cite{lew_ConvexHullsReachable_2025} methods.
Reachability analysis has been useful for applications like quadcopter control~\cite{gillula_DesignGuaranteedSafe_2010}, robotic manipulators~\cite{porges_ReachabilityCapabilityAnalysis_2014}, and aircraft auto-landing~\cite{bayen_AircraftAutolanderSafety_2007}, but has largely been unexplored in design optimization.

\subsection{Contributions}
In this paper, we study reachable sets of linear systems for design optimization of aircraft.
By avoiding the computational complexity of nonlinear reachability, control metrics constructed based on linear reachability are easier to evaluate at each iteration of the optimization problem.
To understand the capabilities of aircraft in realistic scenarios, we study characterizations and approximations of reachable sets under bounded-magnitude inputs.
Although ellipsoidal and zonotopic approaches can approximate these sets with great accuracy, we seek to design constraint metrics based on the system matrices that allow us to evaluate properties of the reachable set.
Our main theoretical contributions are a characterization of the boundary of $\Lp[\infty]$ reachable sets for a subset of planar linear systems with single inputs and an investigation of $\Lp$-norm reachable sets as approximations of the $\Lp[\infty]$-norm reachable sets for general linear systems.
We first note that control inputs driving a system to the exposed points of its reachable set must be saturated at all times, and leverage this to construct a parametric function that describes the boundary of certain systems' reachable sets.
Using calculus of variations, we show a derivation of $\Lp$-norm optimal controls for linear systems and present a numerical approach to approximating $\Lp$-norm reachable sets.
Lastly, we incorporate these developments into design optimization to optimize aircraft maneuverability around points of interest within the flight envelope.

\section{Preliminaries}

This section presents our notational conventions and preliminaries on reachable sets of linear systems.

\subsection{Notation}
Throughout the paper, we use bold-faced symbols to denote vector-valued and matrix-valued quantities.
We denote the Hadamard product by $\odot$.
For vectors $\bx, \bolds{y} \in \R^n$, $\bx~\odot~\bolds{y}~=~[x_1 y_1,\; \ldots \;, x_n y_n ]^\top \in \R^n$.
For matrices $\bA \in \R^{n \times m}, \vect(\bA)$ refers to the vectorization of $\bA$, attained by stacking its columns.
For $\alpha \in \R$, $\bx^\alpha$ denotes the element-wise power of $\bx$.
We use $\langle\cdot,\cdot \rangle: \R^n \times \R^n \to \R$ to denote the inner product on Euclidean space, $\langle \bx,\bolds{y}\rangle = \bx^\top \bolds{y}$, where $^\top$ denotes the transpose of a vector or matrix.
For $p \in [1,\infty)$, we define $\|\bx\|_p^p  := \sum_{i=1}^n |x_i|^p$ and for $p = \infty$, $\|\bx\|_\infty := \max_{i=1,\ldots,n} |x_i|$.
Generalizing to norms on signals, given $\bolds{f}:[0,T] \to \R^m$, we define the $p$th power of the $\Lp$ norm as $\Lpnorm{\bolds{f}(t)}^p := \int_0^T \|\bolds{f}(t)\|_p^p \rd t$.
Similarly, the $\Lp[\infty]$-norm is defined as $\Lpnorm[\infty]{\bolds{f}(\cdot)} = \max_{1 \leq i \leq n, t \in [0,T]} |f_i(t)|$.
Then, the function space $\Lp(\mathcal{I})$ is the set of functions defined over set $\mathcal{I}$ with a finite $\Lp$-norm.
The image of a function is denoted as $\imageoffunction(\bolds{f}) = \theset{ \bolds{f}(t) \setst t \in [0,T]}$.
The convex hull, exposed points, and closure of a set $E\subseteq \R^n$ are denoted by $\convhull(E)$, $\exposed(E)$, and $\closure(E)$.

\subsection{Reachable sets of linear systems}

In this paper, we consider systems of the form
\begin{equation}
    \dot{\bx}(t) = \bA\bx(t) + \bB\bu(t), \label{eq:basic_linear_system}
\end{equation}
where the state $\bx(t) \in \R^n$, $\bu(t) \in \R^m$, and $\bA$, $\bB$ have appropriate dimensions.
We denote the solution at time $t$ of \cref{eq:basic_linear_system} starting from $\bxi[0]$, driven by control signal $\bu(\cdot):[0,T] \to \R^m$ by $\bPhi(t,\bxi[0],\bu(\cdot))$.

Without loss of generality, we consider reachable sets from the origin because nonzero initial conditions will simply shift the reachable set by $e^{\bA T}\bx(0)$.
Given a set of admissible control inputs $\calU \subseteq \R^m$ convex and compact, we denote the reachable set at time $T$ of \cref{eq:basic_linear_system} by
\begin{align}\nonumber
    \reachsets[\calU]{T} := \{ \bPhi(T,\bxi[0],\bu(\cdot)) \setst & \bx(0) = \bolds{0},\; \bu(t) \in \calU \\ & \forall t \in [0,T] \}.  
\end{align}
We also consider admissible controls from the set of measurable functions $\U := \theset{\bu(t):[0,T] \to \R^m }$ satisfying an $\Lp$-norm bound and define $\U_{\Lp} := \left\{\bu(\cdot) \in \R^m \setst \Lpnorm{\bu(t)} \leq c  \right\}$ for some $p \in \N \cup \{\infty\}$ and $c \in \R_{>0}$.
With an abuse of notation, we may also denote the set of reachable states with the $\Lp$-norm bound on the input signal as $\reachsets[\U_{\Lp}]{T}$.

In the special case where $p=2$, it is known~\cite{kalman_ContributionsTheoryOptimal_1960} that $\reachsets[\U_{\Lp[2]}]{T}$ is an ellipsoid characterized by the reachability Gramian $\reachgram{T} = \int_0^T e^{\bA(T-\tau)}\bB\bB^\top e^{\bA^\top (T-\tau)} \rd \tau$ of \cref{eq:basic_linear_system}.
In particular, the $i$th principal axis of $\reachsets[\U_{\Lp[2]}]{T}$ has length $\sqrt{c\lambda_i}$ in the direction of $\bolds{v}_i$, where $(\lambda_i,\bolds{v}_i)$ are an eigenvalue/ eigenvector pair of $\reachgram{T}$.

\section{Reachable set boundaries for \texorpdfstring{$p=\infty$}{p=inf}}
To synthesize constraints for optimization from reachability analysis, we seek algebraic characterizations of reachable sets that can be evaluated based on the $\bA$ and $\bB$ matrices of \cref{eq:basic_linear_system}.
Similarly to how the eigenvectors/eigenvalues of $\reachgram{T}$ describe the geometry of the $\Lp[2]$-norm reachable set, we seek an algebraic representation for the $\Lp[\infty]$ case.

We present a parametric function representation of the reachable set boundary for planar LTI systems with single inputs and real, distinct eigenvalues.
For these systems, our results yield an exact characterization of the set boundary and, as a consequence of convexity, the reachable set itself.

\subsection{Optimal switching controllers}
The first step in this characterization is to recognize that due to convexity and compactness of reachable sets for LTI systems, the boundaries of reachable sets are characterized by the set's exposed points.
Recall that a point on the boundary of a convex set is an exposed point if it strictly maximizes some linear functional~\cite{rockafellar_ConvexAnalysis_2015}.
In the following lemma, we show that these exposed points are achieved by controls on the boundary of the admissible control set.

\begin{lemma} \label[lemma]{lem:saturated_controls_on_boundary}
    For linear system \cref{eq:basic_linear_system} with $\calU_{\ul{\bu},\ol{\bu}}=\theset{ \bu \in \R^m \setst \ul{\bu} \leq \bu \leq \ol{\bu} }$, where $\ul{\bu},\ol{\bu} \in \R^m$ are constant vectors, the exposed points of $\reachsets{T}$ are achieved by control signals of the form
    \begin{equation}~\label{eq:fully_saturated_control}
        u_i(t) = \left\{ \begin{matrix}
            \ol{u}_i, & \sgn(\psi_i(t;\bolds{c})) \geq 0 \\
            \ul{u}_i, & \sgn(\psi_i(t;\bolds{c})) < 0 ,
        \end{matrix} \right. 
    \end{equation}
    where $\bolds{\psi}_i(t;\bolds{c}) = \bolds{c}^\top e^{\bA(T-t)}\bolds{b}_i$, $\bolds{b}_i$ is the i-th column of $\bB$, and $\bolds{c} \in \R^n$ is a constant vector.
\end{lemma}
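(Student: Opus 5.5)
The plan is to use the definition of an exposed point directly and reduce the maximization of a linear functional over the reachable set to a pointwise maximization over the box $\calU_{\ul{\bu},\ol{\bu}}$. Let $\bx^\star$ be an exposed point of $\reachsets{T}$. By definition there is a vector $\bolds{c}\in\R^n$ such that $\bx^\star$ is the unique maximizer of $\bx\mapsto\bolds{c}^\top\bx$ over $\reachsets{T}$. Every reachable point has the variation-of-constants form $\bx = \int_0^T e^{\bA(T-t)}\bB\bu(t)\rd t$. Hence
\begin{equation*}
\bolds{c}^\top\bx=\int_0^T \sum_{i=1}^m \psi_i(t;\bolds{c})\,u_i(t)\rd t ,
\end{equation*}
with $\psi_i(t;\bolds{c})=\bolds{c}^\top e^{\bA(T-t)}\bolds{b}_i$ as in the statement.

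Because the constraint set is a box, the integrand decouples both across components $i$ and across times $t$. For each fixed $t$ and $i$, the quantity $\psi_i u_i$ over $u_i\in[\ul{u}_i,\ol{u}_i]$ is maximized by $u_i=\ol{u}_i$ when $\psi_i>0$ and by $u_i=\ul{u}_i$ when $\psi_i<0$. Choosing the control \cref{eq:fully_saturated_control} pointwise therefore gives an admissible measurable signal, since $\psi_i$ is continuous and the sign sets are measurable. For every other admissible $\bu$ we have $\psi_i(t)u_i(t)\le\psi_i(t)u_i^\star(t)$ pointwise, so $\bu^\star$ attains $\max_{\bx\in\reachsets{T}}\bolds{c}^\top\bx$. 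Uniqueness of the maximizer then forces $\Phi(T,\bolds 0,\bu^\star)=\bx^\star$, which means the exposed point is achieved by a control of the stated form.

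The main subtlety is the set $\{t:\psi_i(t;\bolds{c})=0\}$, where the pointwise maximizer is not unique. The statement sidesteps this by prescribing $\ol{u}_i$ there. This choice is harmless because any value of $u_i$ on that set contributes $\psi_i u_i=0$, so the prescribed control is still a maximizer, and uniqueness of the exposed point again identifies its endpoint with $\bx^\star$. If $\psi_i\equiv 0$ on an interval, that component has no influence on $\bolds{c}^\top\bx$. Even so, the reached state must still equal $\bx^\star$ by uniqueness, so the argument needs no analyticity of $\psi_i$. It is still worth noting that $\psi_i$ is real-analytic in $t$, so it either vanishes identically or has finitely many zeros on $[0,T]$; this gives the bang-bang structure with finitely many switches in the nondegenerate case.
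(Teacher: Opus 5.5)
Your argument is correct and complete. The paper gives no proof to compare against: it says the proof "is omitted and will appear elsewhere." What you wrote is the standard support-function argument the paper implicitly relies on when its next remark identifies this control with the $\mathcal{L}^\infty$-optimal control of Pecsvaradi--Narendra and with Pontryagin's switching theorems. The key steps all hold:
\begin{itemize}
\item Integrating the pointwise inequality $\psi_i(t;\mathbf{c})\,u_i(t)\le\psi_i(t;\mathbf{c})\,u_i^\star(t)$ shows that $u^\star$ attains $\max_{x}\mathbf{c}^\top x$ over the reachable set.
\item Strictness of the exposing functional then forces the endpoint of $u^\star$ to be $x^\star$.
\item Your treatment of the zero set of $\psi_i$, including the case $\psi_i\equiv 0$, is right: uniqueness of the maximizer settles it without analyticity.
\end{itemize}
Note that this route needs neither convexity or compactness of the reachable set nor the maximum principle.

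Two small points are worth making explicit. First, you implicitly assume $\underline{\mathbf{u}}\le\overline{\mathbf{u}}$, so that the box is nonempty and $u^\star$ is admissible.

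Second, the paper's definition of the reachable set for the box $\mathcal{U}$ never specifies the admissible class of signals. Your proof only needs that class to contain $u^\star$. For measurable controls this is immediate, as you note. If piecewise-continuous controls were intended instead, your analyticity observation is exactly what makes $u^\star$ piecewise constant with finitely many switches, hence admissible. In that setting the analyticity remark is a needed step, not a side comment.
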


The proof of this result is omitted and will appear elsewhere.
In addition to providing the form of controls that steer the system to the exposed points of its reachable set, Lemma~\ref{lem:saturated_controls_on_boundary} also implies that those controls are saturated at all times.

\begin{remark}
    The control signal~\eqref{eq:fully_saturated_control} matches the $\mathcal{L}^\infty$-optimal control discussed in~\cite{pecsvaradi_ReachableSetsLinear_1971} if $\ol{\bu} = -\ul{\bu}$.
However, we consider a more general magnitude bound that can be asymmetric.
\end{remark}

Furthermore, for the specific case of planar systems with real and distinct eigenvalues, the number of times the components of the control signal can switch between their maximum and minimum values is upper bounded by one.

\begin{proposition} \label{prop:psi_switches_once}
    Given \cref{eq:basic_linear_system}, assume $\bA$ is diagonalizable with real, distinct eigenvalues, and that $\bA$ is of dimension 2.
    Consider $\bpsi(t;\bolds{c}) = \bolds{c}^\top e^{\bA(T-t)}\bB$ as defined in \cref{lem:saturated_controls_on_boundary} for any arbitrary $\bolds{c} \in \R^n$.
    Then, for any component $\psi_i(t;\bolds{c})$, only one of the two following statements is true:
    \begin{enumerate}
        \item $\psi_i(t;\bolds{c}) = 0$ $\forall t \in [0,T]$.
        \item There exists at most one value of $\eta \in [0,T]$ such that $\psi_i(\eta;\bolds{c}) = 0$. \label{condition:only_one_switch}
    \end{enumerate}

\end{proposition}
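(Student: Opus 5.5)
We will diagonalize $\bA$ and reduce each component of $\bpsi$ to a sum of two real exponentials. Since $\bA \in \R^{2\times 2}$ is diagonalizable with real, distinct eigenvalues $\lambda_1 \neq \lambda_2$, write $\bA = \bolds{V}\,\mathrm{diag}(\lambda_1,\lambda_2)\bolds{V}^{-1}$ with $\bolds{V}$ real and invertible. Then $e^{\bA(T-t)} = \bolds{V}\,\mathrm{diag}(e^{\lambda_1(T-t)},e^{\lambda_2(T-t)})\bolds{V}^{-1}$. For a fixed column $\bolds{b}_i$ of $\bB$ and fixed $\bolds{c}$, set $\bolds{w} = \bolds{V}^\top\bolds{c}$ and $\bolds{z} = \bolds{V}^{-1}\bolds{b}_i$. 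Then
\begin{equation*}
    \psi_i(t;\bolds{c}) = \alpha_1 e^{\lambda_1(T-t)} + \alpha_2 e^{\lambda_2(T-t)}, \qquad \alpha_j = w_j z_j \in \R,
\end{equation*}
so each component is a real linear combination of two distinct real exponentials in the variable $s = T-t \in [0,T]$.

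Next we will split into cases according to the coefficients $\alpha_1,\alpha_2$.
\begin{enumerate}
    \item If $\alpha_1=\alpha_2=0$, then $\psi_i \equiv 0$ on $[0,T]$, which is statement 1.
    \item If exactly one coefficient is nonzero, then $\psi_i$ is a nonzero multiple of a single exponential, so it never vanishes. Statement 2 then holds with zero roots.
    \item If both coefficients are nonzero, then $\psi_i(\eta;\bolds{c})=0$ is equivalent to
    \begin{equation*}
        e^{(\lambda_1-\lambda_2)(T-\eta)} = -\alpha_2/\alpha_1 .
    \end{equation*}
    Because $\lambda_1 \neq \lambda_2$, the map $s \mapsto e^{(\lambda_1-\lambda_2)s}$ is strictly monotone, hence injective on $\R$. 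So there is at most one solution $s$, and therefore at most one $\eta = T-s \in [0,T]$. There is none at all if $-\alpha_2/\alpha_1 \le 0$ or if the solution falls outside $[0,T]$. This is statement 2.
\end{enumerate}

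Finally we will check the exclusivity claim, namely that only one of the two statements holds. Since $T>0$, the identically-zero case has infinitely many zeros and so cannot satisfy statement 2. Conversely, in cases 2 and 3 the function is not identically zero, because two exponentials with distinct rates are linearly independent as functions on any interval of positive length. So $\psi_i \equiv 0$ forces $\alpha_1=\alpha_2=0$, and the dichotomy is exact.

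The main obstacle is modest. It is to make sure the diagonalization is real, which follows because the eigenvalues are real, so the eigenvectors can be chosen real. It is also to handle the degenerate coefficient cases cleanly. The core of the argument is the strict monotonicity of $e^{(\lambda_1-\lambda_2)s}$, which is where distinctness of the eigenvalues is used.
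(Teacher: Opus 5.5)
Your proof is correct and follows essentially the same route as the paper: both diagonalize $\mathbf{A}$ to write $\psi_i$ as a real combination of two exponentials with distinct rates, and your $\alpha_j e^{\lambda_j T}$ is exactly the paper's $\alpha_{j,i}h_j$. The only difference is the finishing step. You solve $\psi_i=0$ directly and use injectivity of $s\mapsto e^{(\lambda_1-\lambda_2)s}$, whereas the paper assumes two zeros $\eta_1\neq\eta_2$ and eliminates to force $\alpha_{1,i}h_1=\alpha_{2,i}h_2=0$. Your version also spells out the degenerate coefficient cases and the mutual exclusivity of the two alternatives (using $T>0$), which the paper leaves implicit.
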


\begin{proof}
    Let $\bA = \bolds{VDV}^{-1}$ be the diagonalization of $\bA$ where the columns of $\bolds{V}$ are eigenvectors and $\bolds{D}$ is the diagonal matrix of eigenvalues.
    Since $\spann (\bolds{V}) = \R^2$, $\exists \alpha_{1,i},\alpha_{2,i}$ such that $\bolds{b}_i = \alpha_{1,i}\bolds{v}_1 + \alpha_{2,i}\bolds{v}_2$.
    Substituting this into the expression of $\psi_i(t;\bolds{c})$ and denoting $\tilde{\bolds{c}}^\top = \bolds{c}^\top e^{\bA T}$, we find $\psi_i(t;\bolds{c}) = \alpha_{1,i}e^{-\lambda_1 t} \langle \tilde{\bolds{c}},\bolds{v}_1 \rangle + \alpha_{2,i} e^{-\lambda_2 t} \langle \tilde{\bolds{c}},\bolds{v}_2 \rangle$.
    Now, suppose that $\psi_i(\eta_1;\bolds{c}) = 0 = \psi_i(\eta_2;\bolds{c})$ for $\eta_1 \neq \eta_2$.
Expanding these expressions yields
    \begin{align}
        \alpha_{1,i} h_1 e^{-\lambda_1 \eta_1} + \alpha_{2,i}h_2 e^{-\lambda_2 \eta_1} &= 0 \label{eq:psi_0_proof_upper}\\ \label{eq:psi_0_proof_lower}
        \alpha_{1,i} h_1 e^{-\lambda_1 \eta_2} + \alpha_{2,i}h_2 e^{-\lambda_2 \eta_2} &= 0,
    \end{align}
    where $h_i = \langle \tilde{\bolds{c}},\bolds{v}_i \rangle$.
    Suppose w.l.o.g that $\eta_1 = \eta_2 + \epsilon$ for some $\epsilon > 0$.
    Then, we can substitute into \cref{eq:psi_0_proof_upper} to find
    \begin{align} \label{eq:psi_0_proof_intermediary}
        \alpha_{1,i}h_1e^{-\lambda_1 \eta_2}e^{-\lambda_1 \epsilon} + \alpha_{2,i}h_2 e^{-\lambda_2 \eta_2}e^{-\lambda_2 \epsilon} = 0.
    \end{align}
    Then, substituting for $\alpha_{1,i}$ in \cref{eq:psi_0_proof_intermediary} using \cref{eq:psi_0_proof_lower}, we can further simplify to
    \begin{equation}  \label{eq:psi_0_first_cond}
        \alpha_{2,i}h_2 e^{-\lambda_2 \eta_2}\left( e^{-\lambda_2 \epsilon} - e^{-\lambda_1 \epsilon} \right) = 0.
    \end{equation}
    Applying the same argument, but substituting instead for the $\alpha_{2,i}$ term, we find
    \begin{equation}
        \alpha_{1,i}h_1e^{-\lambda_1 \eta_2} \left( e^{-\lambda_1 \epsilon} - e^{-\lambda_2 \epsilon} \right) = 0 \label{eq:psi_0_second_cond}.
    \end{equation}
    From the assumptions on $\lambda_i$ being distinct, \cref{eq:psi_0_first_cond} is satisfied if either of $\alpha_{2,i} = 0$ or $h_2=0$.
    Suppose $\alpha_{2,i} = 0$.
    Then, $\alpha_{1,i} \neq 0$ for nontrivial $\bolds{b}_i$ and $h_1 = \langle \bolds{\tilde{c}},\bolds{v}_1 \rangle = 0$ to satisfy \cref{eq:psi_0_second_cond}.
    Then, by definition, $\psi_i(t;\bolds{c}) \equiv 0$.
    The case of $h_2 = 0$ leads to the same result.
    In the case where all of $\alpha_{1,i},\alpha_{2,i},h_1,h_2$ are nonzero, then \cref{eq:psi_0_first_cond,eq:psi_0_second_cond} cannot be satisfied and we have a contradiction that implies $\epsilon = 0$ and $\eta_1 = \eta_2$.
    Thus, either $\psi_i(t;\bolds{c})~\equiv~0$ or there is at most one $\eta \in [0,T]$ satisfying $\psi_i(\eta;\bolds{c}) = 0$.

\end{proof}

\begin{remark}
    Switching controllers are also discussed in~\cite[Thms. 9, 10]{pontryagin_MathematicalTheoryOptimal_2018}.
    Here, we expand on these results to provide the form of the controller~\eqref{eq:fully_saturated_control}, parameterized by the vector~$\bolds{c}$.
    \end{remark}

\Cref{prop:psi_switches_once} states that under the given assumptions, any component of the optimal control driving the system to an exposed point of $\reachsets{T}$ only switches between its maximum and minimum values at most one time.
The intuition for the restrictive assumptions is that the exponential matrix in $\bpsi(t;\bolds{c})$ connects the evolution of $\bpsi(t;\bolds{c})$ to the evolution of \cref{eq:basic_linear_system}.
Thus, the number of times $\bpsi(t;\bolds{c})$ flips sign can increase for higher-dimensional systems or for oscillatory systems.
However, when these assumptions hold, a convenient parameterization can be uncovered.

\subsection{Parameteric function representation of reachable sets}
 We next use \Cref{lem:saturated_controls_on_boundary} and \cref{prop:psi_switches_once} to characterize $\reachsets{T}$ using the images of parametric functions.
\begin{theorem} \label{thm:parameterizing_singleinput_2dim_boundary}
    Using the assumptions in \cref{prop:psi_switches_once}, assume further that there is only one input to the system, bounded by $\ol{u},\ul{u} \in \R$.
    For $\eta \in [0,T]$, consider the functions:
    \begin{align} \label{eq:boundary_eq1}
        \bxigreek_1(\eta) &= \ol{u}\int\limits_0^\eta e^{\bA(T-\tau)}\bB \rd \tau + \ul{u}\int\limits_\eta^T e^{\bA(T-\tau)}\bB \rd \tau \\ \label{eq:boundary_eq2}
        \bxigreek_2(\eta) &= \ul{u}\int\limits_0^\eta e^{\bA(T-\tau)}\bB \rd \tau + \ol{u}\int\limits_\eta^T e^{\bA(T-\tau)}\bB \rd \tau
    \end{align}
    as mapping from $[0,T] \to \R^n$ and let $\calG_1 = \imageoffunction(\bxigreek_1), \, \calG_2 = \imageoffunction(\bxigreek_2)$.
    Then, $\reachsets[\calU_{\ul{u},\ol{u}}]{T} = \convhull\left(\calG_1 \bigcup \calG_2 \right)$.
\end{theorem}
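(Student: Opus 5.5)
We prove the two inclusions separately; the only analytic inputs are \cref{lem:saturated_controls_on_boundary} and \cref{prop:psi_switches_once}, combined with convexity and compactness of $\reachsets[\calU_{\ul{u},\ol{u}}]{T}$.

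\emph{The inclusion $\convhull(\calG_1 \cup \calG_2) \subseteq \reachsets[\calU_{\ul{u},\ol{u}}]{T}$.} This direction is immediate. For each $\eta\in[0,T]$, $\bxigreek_1(\eta)$ is the endpoint $\bPhi(T,\bolds{0},u(\cdot))$ of the trajectory driven by the admissible piecewise constant control $u=\ol{u}$ on $[0,\eta)$ and $u=\ul{u}$ on $[\eta,T]$. Likewise, $\bxigreek_2(\eta)$ is reached by the control that switches from $\ul{u}$ to $\ol{u}$. Hence $\calG_1\cup\calG_2\subseteq\reachsets[\calU_{\ul{u},\ol{u}}]{T}$. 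The reachable set of an LTI system with a convex compact input set is convex, since convex combinations of admissible controls are admissible and $\bPhi$ is linear in $u$. Convexity then gives the inclusion.

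\emph{The inclusion $\reachsets[\calU_{\ul{u},\ol{u}}]{T} \subseteq \convhull(\calG_1\cup\calG_2)$.} The reachable set is compact and convex, so by Straszewicz's theorem it equals $\closure\convhull(\exposed(\reachsets[\calU_{\ul{u},\ol{u}}]{T}))$. It therefore suffices to show two things: every exposed point lies in $\calG_1\cup\calG_2$, and $\convhull(\calG_1\cup\calG_2)$ is closed.

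Closedness holds because $\bxigreek_1,\bxigreek_2$ are continuous on the compact interval $[0,T]$. Their images are compact, and the convex hull of a compact set in $\R^n$ is compact by Carathéodory's theorem.

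Now let $\bx^\ast$ be an exposed point, strictly maximizing $\langle\bolds{c},\cdot\rangle$ for some $\bolds{c}\neq 0$. By \cref{lem:saturated_controls_on_boundary}, $\bx^\ast$ is attained by the bang--bang control \eqref{eq:fully_saturated_control} built from $\psi(t;\bolds{c})=\bolds{c}^\top e^{\bA(T-t)}\bB$. We split on the alternatives of \cref{prop:psi_switches_once}.

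\emph{Case $\psi\equiv 0$.} Then $\langle\bolds{c},\bPhi(T,\bolds{0},u)\rangle=\int_0^T\psi(t)u(t)\rd t=0$ for every admissible $u$. The functional is therefore constant on the reachable set. A strict maximizer can then exist only if the reachable set is a single point, which happens only if $\bB=0$. In that degenerate case the claim is trivial, since both sides equal $\{\bolds{0}\}$.

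\emph{Case with at most one zero $\eta$.} Suppose $\psi$ has at most one zero $\eta\in[0,T]$. Since $\psi$ is a nonzero combination of two exponentials, its zero is a sign change or a point of tangency. Off $\eta$, the sign of $\psi$ is constant on $[0,\eta)$ and on $(\eta,T]$. The control is therefore, up to a measure-zero set that does not affect $\bPhi$, one of the following:
\begin{enumerate}
    \item $\ol{u}$ then $\ul{u}$, which gives $\bxigreek_1(\eta)$;
    \item $\ul{u}$ then $\ol{u}$, which gives $\bxigreek_2(\eta)$;
    \item constant $\ol{u}$, which gives $\bxigreek_1(T)=\bxigreek_2(0)$;
    \item constant $\ul{u}$, which gives $\bxigreek_1(0)=\bxigreek_2(T)$.
\end{enumerate}
In every case $\bx^\ast\in\calG_1\cup\calG_2$. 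Hence the exposed points lie in $\calG_1\cup\calG_2$, and taking closed convex hulls yields $\reachsets[\calU_{\ul{u},\ol{u}}]{T}\subseteq\convhull(\calG_1\cup\calG_2)$.

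I expect the main obstacle to be the reduction from exposed points to the whole set. It needs Straszewicz's theorem, the closed convex hull of the exposed points, rather than Krein--Milman, which is stated for extreme points. It also needs the care about the degenerate $\psi\equiv0$ case and about the measure-zero ambiguity in the sign at $\eta$. If Straszewicz's theorem is unavailable, the fallback is a direct support-function argument. For every direction $\bolds{c}$, the maximum of $\langle\bolds{c},\cdot\rangle$ over the reachable set equals $\int_0^T\max(\ol{u}\psi,\ul{u}\psi)\rd t$, and the argument above shows this value is attained at a point of $\calG_1\cup\calG_2$. Two compact convex sets with equal support functions coincide, which gives the same conclusion.
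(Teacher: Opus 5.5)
Your proof is correct and follows the same route as the paper. The forward inclusion uses the one-switch controls and convexity. The reverse inclusion uses \cref{lem:saturated_controls_on_boundary} and \cref{prop:psi_switches_once} to place the exposed points in $\calG_1\cup\calG_2$, then $\reachsets[\calU_{\ul{u},\ol{u}}]{T}=\closure\convhull\exposed(\reachsets[\calU_{\ul{u},\ol{u}}]{T})$ (Straszewicz, which the paper cites from Rockafellar) and compactness of $\convhull(\calG_1\cup\calG_2)$.

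Your handling of the $\psi\equiv 0$ case, the measure-zero switching instant, and closedness via Carath\'eodory fills in details the paper leaves implicit. There is one slip: the singleton case also arises when $\ul{u}=\ol{u}$, and then the single point need not be $\bolds{0}$. The claim is still trivial there, since $\calG_1$ is a nonempty subset of that singleton.
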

\begin{proof}
    First, we show $\convhull \left(\calG_1 \bigcup \calG_2 \right) \subseteq \reachsets[\calU_{\ul{u},\ol{u}}]{T}$.
    By definition, if $\bolds{g} \in \calG_1$, there exists some $\hat{\eta}$ such that $\bolds{g} = \bxigreek_1(\hat{\eta})$, implying that $\bolds{g} = \bPhi(T,0,\tilde{u})$, where
    \begin{equation*}
            \tilde{u}(t) = \left\{ \begin{matrix} \ol{u}, & t \in [0,\hat{\eta}] \\ \ul{u}, & t \in (\hat{\eta},T]. \end{matrix}  \right.
    \end{equation*}
    The same holds if $\bolds{g} \in \calG_2$ where $\tilde{u}(t)$ has $\ol{u},\ul{u}$ swapped.
    This trivially shows that $\calG_1 \bigcup \calG_2 \subseteq \reachsets[\calU_{\ul{u},\ol{u}}]{T}$.
    Then, by convexity of $\reachsets[\calU_{\ul{u},\ol{u}}]{T}$, we have that $\convhull \left(\calG_1 \bigcup \calG_2 \right) \subseteq \reachsets[\calU_{\ul{u},\ol{u}}]{T}$.

    To see the reverse inclusion, note that by \cref{lem:saturated_controls_on_boundary} and \cref{prop:psi_switches_once}, the exposed points of $\reachsets[\calU_{\ul{u},\ol{u}}]{T}$ are achieved by controls that are saturated at all times and switch at most once.
    Since $\calG_1$ and $\calG_2$ capture all solutions of the LTI system using saturated controllers with at most one switch, $\exposed \left( \reachsets[\calU_{\ul{u},\ol{u}}]{T}\right) \subseteq \convhull \left( \calG_1 \bigcup \calG_2 \right)$.
    By convexity and compactness of the reachable set and the compactness of $\calG_1$ and $\calG_2$, $\reachsets[\calU_{\ul{u},\ol{u}}]{T} = \closure \convhull \exposed \left( \reachsets[\calU_{\ul{u},\ol{u}}]{T} \right) \subseteq \convhull \left( \calG_1 \bigcup \calG_2 \right)$~\cite{rockafellar_ConvexAnalysis_2015}.

\end{proof}

\Cref{thm:parameterizing_singleinput_2dim_boundary} uses the solution to linear systems to compute the trajectory at time $T$ for all controls that take the form of \cref{eq:fully_saturated_control}.
By leveraging the knowledge that these controls will hold their maximum or minimum values and switch at some point in the time window $[0,T]$, \cref{eq:boundary_eq1,eq:boundary_eq2} captures $\exposed \reachsets[\calU_{\ul{u},\ol{u}}]{T}$ and their convex hull reveals $\reachsets[\calU_{\ul{u},\ol{u}}]{T}$.
We next show an application of \cref{thm:parameterizing_singleinput_2dim_boundary}.

\begin{example} \label{example:reach_set_boundary}
    Consider the system:
\begin{equation} \label{eq:example_2d_system}
    \dot{\bx}(t) = \begin{bmatrix}
        0.4 & -0.3 \\ 0.5 & 1.7
    \end{bmatrix}\bx(t) + \begin{bmatrix}
        1 \\0
    \end{bmatrix} u(t).
\end{equation}
The $\bA$ matrix of \cref{eq:example_2d_system} is 2-dimensional and has real eigenvalues 0.53 and 1.57, so it satisfies the necessary assumptions for \cref{prop:psi_switches_once}
The systems is also single input, so \cref{thm:parameterizing_singleinput_2dim_boundary} can be applied.
Considering inputs of the set $\calU_{-1,1} = \theset{u \setst -1 \leq u \leq 1}$ and substituting $\ul{u} = -1$ and $\ol{u} = 1$ into \cref{eq:boundary_eq1,eq:boundary_eq2}, we plot $\calG_1$ and $\calG_2$ against a zonotopic inner-approximation of the reachable set using CORA~\cite{althoff_IntroductionCORA2015_2015} as shown in \cref{fig:boundary_parameterization_example}.
An exact characterization of $\reachsets[\calU_{-1,1}]{1}$ is given by \cref{eq:boundary_eq1,eq:boundary_eq2}.
As seen in \cref{fig:boundary_parameterization_example}, the parameterization can fully capture the boundary of the reachable set for this system.
Taking the convex hull of the parameterization or taking convex combinations of points in $\calG_1$ or $\calG_2$ will yield $\reachsets[\calU_{-1,1}]{1}$.

\begin{figure}[!t]
    \centering
    \includegraphics[width=0.8\linewidth]{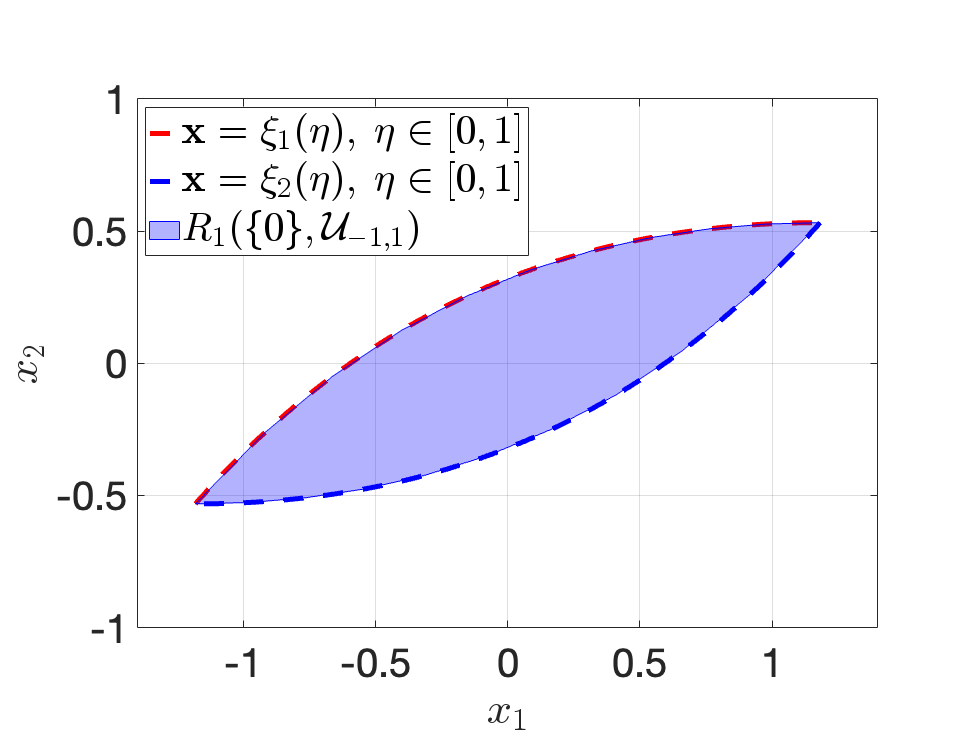}
    \caption{Comparison of the boundary parameterization using the switching control scheme as outlined in \cref{thm:parameterizing_singleinput_2dim_boundary} with a zonotopic inner-approximation of the reachable set.}
    \label{fig:boundary_parameterization_example}
\end{figure}

\end{example}

Although this approach provides an exact characterization of the reachable set boundary, the assumptions are restrictive, and the results do not scale to higher dimensions or multiple inputs.
In the following section, we study reachable sets for general linear systems and instead use $\Lp$-norms to approximate $\Lp[\infty]$ magnitude bounds.

\section{Reachable sets for finite \texorpdfstring{$p$}{p}}\label{sec:p_norm_reach_sets}

In this section, we remove restrictions on the system matrices and number of inputs, and consider reachable sets using inputs with bounded $\Lp$-norm, i.e. $\reachsets[\U_{\Lp}]{T}$, where $\U_{\Lp} = \theset{\bu(t) \in \U \setst \Lpnorm{\bu(t)} \leq 1}$.

\subsection{\texorpdfstring{$p$}{p}-norm optimal controls}
To understand the set of states that can be reached using controls with a bounded $\mathcal{L}^p$ norm, we must first consider controls that are optimal in the $\mathcal{L}^p$ norm.
This problem is discussed in~\cite{pecsvaradi_ReachableSetsLinear_1971} without proof, which we provide here.
The finite-time optimal control problem we seek to solve is
\begin{align} \nonumber
    \min_{\bu(\cdot) \in \mathcal{L}^p([0,T];\R^m)} \quad & \int\limits_0^T L(\bx(\tau),\bu(\tau)) \rd \tau \\  \label{eq:opt_contr_prob}
    s.t. \quad & \dot{\bx}(t) = \bolds{f}(\bx(t),\bu(t)) \\ \nonumber 
    & \bx(0) = \bolds{0}, \quad \bx(T) = \bxi[f]
\end{align}
where $L(\bx(\tau),\bu(\tau)) = \frac{1}{p}\|\bu(\tau)\|^p_p$, $\bolds{f}(\bx(t),\bu(t)) = \bA\bx(t) + \bB\bu(t)$, $\bxi[f]$ is the desired final state of the linear system at time $T$, and $\bolds{0}$ is an appropriately-sized vector of zeros.
First, we introduce the notion of a Hamiltonian.
\begin{definition}[\cite{athans2007optimal}]
     Let $H(\bx(t),\blambda(t),\bu(t)) = L(\bx(t),\bu(t)) + \langle\blambda(t),\bA\bx(t)+\bB\bu(t)\rangle$ where $\bx(t) \in \R^n$, $\blambda(t) \in \R^n$, and $\bu(t) \in \R^m$.
     We call $H(\bx(t),\blambda(t),\bu(t))$ the Hamiltonian of our problem and $\blambda(t)$ the costate vector.
\end{definition}
Applying Pontryagin's Minimum Principle, we can derive the $\mathcal{L}^p$-norm optimal controls, which we define as the solutions to the optimal control problem \cref{eq:opt_contr_prob}.
\begin{theorem} \label{thm:p-norm_control}
    The optimal control for \cref{eq:opt_contr_prob} is \begin{equation}
        \bu^*(t) = \left| -\bB^\top e^{-\bA^\top t} \blambda^*(0) \right|^{\frac{1}{p-1}}\odot \, \sgn(-\bB^\top e^{-\bA^\top t} \blambda^*(0)) \label{eq:opt_control_theorem}
    \end{equation}where $\blambda^*(0)$ is the costate initial condition and satisfies 
    $\bxi[f] = \int\limits^T_0 e^{\bA(t-\tau)} \bB \bu^*(t)$.
    In the case where $p$ is even, this expression can be simplified to
    \begin{equation} \label{eq:opt_p_control_even}
        \bu^*(t) = \left(-\bB^\top e^{-\bA^\top t} \blambda^*(0)\right)^{\frac{1}{p-1}}
    \end{equation}
\end{theorem}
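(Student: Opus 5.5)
I will apply Pontryagin's Minimum Principle to \cref{eq:opt_contr_prob} using the Hamiltonian $H$ defined above. The problem is a normal fixed-endpoint problem with fixed final time. I will take the abnormal multiplier equal to one, which is justified for linear dynamics whenever the target $\bxi[f]$ is reachable, i.e.\ under controllability. The argument then has three steps: derive the costate dynamics, minimize $H$ pointwise in $\bu$, and impose the boundary condition that fixes $\blambda^*(0)$.

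\textbf{Step 1: costate dynamics.} The costate equation is $\dot{\blambda}(t) = -\pder[H]{\bx} = -\bA^\top \blambda(t)$. Since $L$ does not depend on $\bx$, this gives $\blambda^*(t) = e^{-\bA^\top t}\blambda^*(0)$. The endpoints $\bx(0)$ and $\bx(T)$ are both fixed, so $\blambda^*(0)$ is a free constant vector with no transversality condition.

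\textbf{Step 2: pointwise minimization of $H$.} The $\bu$-dependent part of the Hamiltonian is $\frac{1}{p}\sum_i |u_i|^p + \langle \bB^\top\blambda^*(t), \bu\rangle$. This separates across components. Writing $s_i(t) = -(\bB^\top e^{-\bA^\top t}\blambda^*(0))_i$, each scalar term $\frac1p |u_i|^p - s_i u_i$ is strictly convex and coercive for $p>1$, so it has a unique minimizer. Its derivative is $|u_i|^{p-1}\sgn(u_i) - s_i$, and setting this to zero gives $u_i = |s_i|^{1/(p-1)}\sgn(s_i)$. Collecting the components with $\odot$ yields \cref{eq:opt_control_theorem}.

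The case $s_i = 0$ is consistent with this formula, since the minimizer is then $u_i = 0$. I will point out that $p=1$ is excluded, because the exponent $1/(p-1)$ is undefined there.

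\textbf{Step 3: choosing $\blambda^*(0)$.} Substituting $\bu^*$ into the variation-of-constants formula gives $\bx(T) = \int_0^T e^{\bA(T-\tau)}\bB\bu^*(\tau)\rd\tau$. The costate initial condition is whatever value makes this equal $\bxi[f]$; this is the stated condition, with the integrand's time arguments read as $T-\tau$ and $\tau$.

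To show such a $\blambda^*(0)$ exists, I would argue by convex duality. The map $\blambda_0 \mapsto \int_0^T \frac{1}{q}\|\bB^\top e^{-\bA^\top\tau}\blambda_0\|_q^q\rd\tau + \langle \blambda_0, e^{-\bA T}\bxi[f]\rangle$, with $q = p/(p-1)$, is strictly convex and coercive under controllability. Its stationarity condition reproduces the endpoint constraint.

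Sufficiency of these necessary conditions follows from convexity. The cost is convex and the constraint map $\bu \mapsto \bx(T)$ is affine, so any control satisfying the PMP conditions is a global minimizer. Strict convexity then gives uniqueness.

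\textbf{Even $p$.} When $p$ is even, $p-1$ is odd, so the real odd root $x^{1/(p-1)}$ satisfies $|x|^{1/(p-1)}\sgn(x) = x^{1/(p-1)}$. This gives \cref{eq:opt_p_control_even}.

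\textbf{Main obstacle.} I expect the main difficulty to be rigor rather than computation. Two points need care: justifying normality of the multiplier, and showing existence of $\blambda^*(0)$, which requires controllability, an assumption not stated in the theorem. If controllability fails, I would restrict to $\bxi[f]$ in the reachable subspace and argue on that subspace.
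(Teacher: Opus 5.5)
Your proposal is correct and follows essentially the same route as the paper. Both use Pontryagin's principle with costate $\blambda^*(t)=e^{-\bA^\top t}\blambda^*(0)$, minimize the Hamiltonian componentwise in $\bu$, and pin down $\blambda^*(0)$ with the terminal constraint, which you correctly read as $\bxi[f]=\int_0^T e^{\bA(T-\tau)}\bB\bu^*(\tau)\rd\tau$. Your additions fill in steps the paper's proof asserts without justification: strict convexity in place of the paper's Hessian check, the convexity argument that the PMP conditions are sufficient, and the dual problem in $\blambda_0$ (whose stationarity condition does reproduce the endpoint constraint, since $q-1=1/(p-1)$) giving existence of $\blambda^*(0)$ under controllability.
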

\begin{proof}
    Following the application of Pontryagin's Minimum Principle to the linear fixed-end-point, fixed-time problem in~\cite{athans2007optimal}, we first form the Hamiltonian
    \begin{align} \label{eq:hamiltonian_def}
        H(\bx(t),\blambda(t),\bu(t)) = & \frac{1}{p}\|\bu(t)\|^p_p \\ &+ \blambda(t)^\top \bA\bx(t) + \blambda(t)^\top \bB\bu(t). \nonumber
    \end{align}
    Then, for an optimal control $\bu^*(t)$ and corresponding optimal trajectory $\bx^*(t)$, there exists $\blambda^*(t)$ such that:
    \begin{enumerate}[label=\alph*)] 
        \item The Hamilton canonical equations are satisfied:\label{PMP_hamilton_canonical}
        \begin{align*}
            \dot{\bx}^*(t) &= \frac{\partial H}{\partial \blambda} (\bx^*(t),\blambda^*(t),\bu^*(t)) = \bA\bx^*(t) + \bB\bu^*(t) \\
            \dot{\blambda}^*(t) &= -\frac{\partial H}{\partial \bx}(\bx^*(t),\blambda^*(t),\bu^*(t)) = -\bA^\top \blambda^*(t)
        \end{align*}
        \item $\bu^*(t)$ minimizes the Hamiltonian: \label{PMP_hamiltonian_min}
        \begin{equation} \label{eq:hamiltonian_min}
            H(\bx^*(t),\blambda^*(t),\bu^*(t)) \leq H(\bx^*(t),\blambda^*(t),\bu(t))
        \end{equation}$\forall \bu \in \R^m,\, \forall t \in \R_{+}$.
    \end{enumerate}
    To verify condition \cref{PMP_hamiltonian_min}, \cref{eq:hamiltonian_min} can be written as:
    \begin{equation*}
        \frac{1}{p}\|\bu^*(t)\|^p_p + \blambda^*(t)^\top \bB\bu^*(t) \leq \frac{1}{p}\|\bu(t)\|^p_p + \blambda^*(t)^\top \bB \bu(t)
    \end{equation*}
    Letting $\psi(\bu(t)) = \frac{1}{p}\|\bu(t)\|^p_p + \blambda^*(t)^\top \bB\bu(t)$, $\bu^*(t)$ should achieve the global minimum of $\psi(\bu(t))$ for all values of $t$.
    The first-order necessary condition for this is:
    \begin{align*}
        \left.\frac{\partial \psi(\bu(t))}{\partial \bu(t)}\right\vert_{\bu(t) = \bu^*(t)} &= \bolds{0}_{1\times m} \\
        \pder{\bu(t)}\left( \frac{1}{p}\|\bu(t)\|^p_p \right) + \blambda^*(t)^\top \bB &= \bolds{0}_{1\times m} \\
        p \frac{1}{p}\|\bu(t)\|_p^{p-1} \left(\frac{\bu(t) \odot |\bu(t)|^{p-2}}{\|\bu(t)\|_p^{p-1}} \right)^\top + \blambda^*(t)^\top \bB &= \bolds{0}_{1\times m} \\
        \bu(t) \odot|\bu(t)|^{p-2} + \bB^\top \blambda^*(t) &= \bolds{0}_{1 \times m}
    \end{align*}
    where the absolute value of a vector is applied component-wise.
    It can be verified that the candidate $\bar{\bu}(t) = \left| -\bB^\top \blambda^*(t) \right|^{\frac{1}{p-1}}\odot \, \sgn \left(-\bB^\top \blambda^*(t) \right)$ satisfies the above equality.
    Checking the second-order necessary conditions for a global minimum of $\psi(\bu(t))$, we can see that
    \begin{equation*}
        \left.\frac{\partial^2 \psi(\bu(t))}{\partial \bu ^2} \right|_{\bu(t) = \bar{\bu}(t)} =
        \begin{bmatrix}
        d_1 &        & 0 \\
            & \ddots &   \\
        0   &        & d_m
        \end{bmatrix}
    \end{equation*} 
    where $d_i = |\bar{u}_i|^{p-2} + (p-2)\,\bar{u}_i\,|\bar{u}_i|^{p-3}\,\sgn(\bar{u}_i)$ and the dependence of components of $\bar{\bu}(t)$ on time and the costate are omitted for brevity.
    The diagonal elements are all nonnegative, so $\frac{\partial^2}{\partial\bu^2} \psi(\bu(t))$ is positive semidefinite.
    Thus, $\bar{\bu}(t)$ minimizes $\psi(\bu(t))$ and satisfies condition~\cref{PMP_hamiltonian_min}.

    Referring to the Hamilton canonical equations in condition~\cref{PMP_hamilton_canonical}, it is clear that $\blambda^*(t) = e^{-\bA^\top t}\blambda^*(0)$, which can be substituted into $\bar{\bu}(t)$.
    Enforcing the terminal state with the canonical equations also imposes that $\bxi[f] = \int^T_0 e^{\bA(t-\tau)} \bB \bu^*(t)$.
    Thus, $\bar{\bu}(t)$ minimizes \cref{eq:opt_contr_prob}.
\end{proof}

In the case where $p=2$, this result can be shown to be identical to the case of minimum-energy control~\cite{kalman_ContributionsTheoryOptimal_1960}:
\begin{equation} \nonumber
    \bu^*(t) = \bB^\top e^{\bA^\top (T-t)} \reachgram[-1]{T} \bxi[f].
\end{equation} However, for the more general case of $p > 2$, we have not found an analytical method to solve for the costate initial condition $\blambda^*(0)$ in terms of the terminal state $\bxi[f]$.
So even though we know the form of the optimal control to reach any point from the origin, evaluating that control for a specific point is not straightforward.
While this poses challenges for an exact characterization of $p$-norm reachable sets, \cref{thm:p-norm_control} opens the door to numerical approximations of $p$-norm reachable sets.
Rather than searching across the entire \textit{function space} of possible control inputs $\U_{\Lp}$, this reduces the approximation of $\Lp$-norm reachable sets to searching over a grid of costate initial conditions $\blambda_0 \in \R^n$ in a \textit{vector space}. 

\begin{example} \label{example:lp_norm_reach_set}

Consider the system \cref{eq:example_2d_system} from \cref{example:reach_set_boundary}.
Now, we present a numerical approximation of $\reachsets[\U_{\Lp[\infty]}]{1}$ using $\reachsets[\U_{\Lp[6]}]{1}$.
Sampling 1,525 values of $\blambda_0$ ranging between $[\pm 5,\pm 100]^\top$ and simulating trajectories from the origin using controllers of the form \cref{eq:opt_control_theorem} establishes an association between terminal points $\bxi[f]$ and the optimal $\Lp[6]$-norm controllers to reach them.
Then, by computing the cost of each controller, we can determine which states were reachable using inputs with unit $\Lp[6]$-norm cost in time $T =1$.

\begin{figure}[!t]
    \centering
    \includegraphics[width=0.8\linewidth]{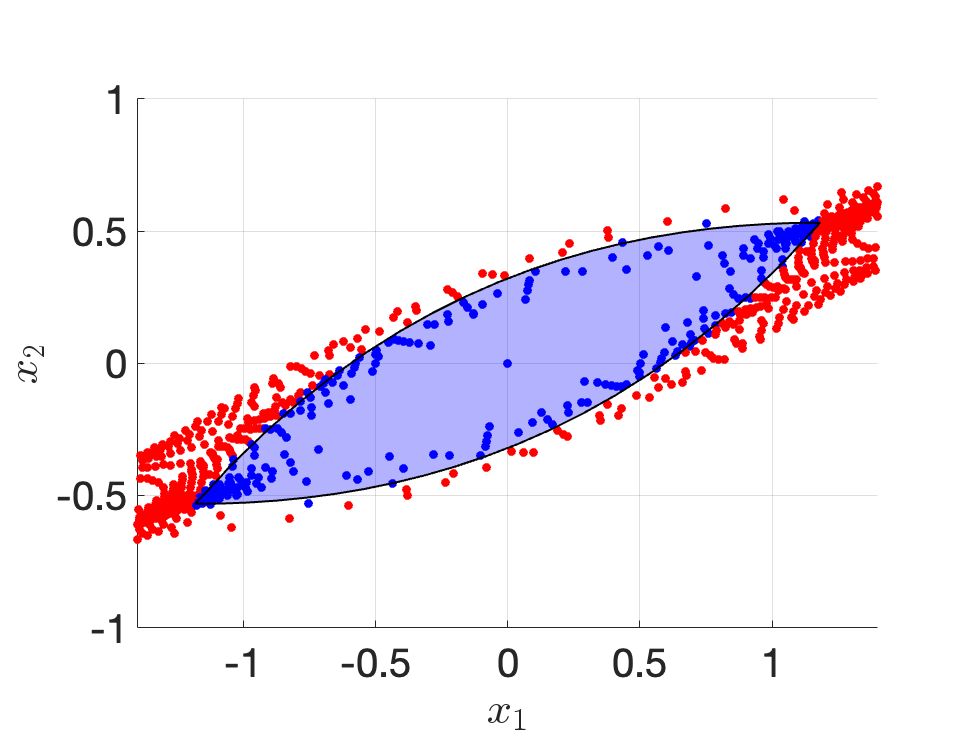}
    \caption{Numerical approximation of $\reachsets[\U_{\Lp[6]}]{1}$ for \cref{eq:example_2d_system}.
    Blue points denote states that can be reached using unit $\Lp[6]$-norm controls, while red points denote states that cannot be reached with unit $\Lp[6]$-norm controls.
    A zonotope inner-approximation of $\reachsets[\calU_{-1,1}]{1}$ is shown by the shaded region.}
    \label{fig:2d_p6_vs_cora}
\end{figure}

This approximation is shown in \cref{fig:2d_p6_vs_cora}, where the blue points denote states that can be reached using inputs with unit $\Lp[6]$-norm controls and the red points denote states where this is not the case.
A zonotopic inner-approximation of $\reachsets[\calU_{-1,1}]{1}$ is also shown for comparison in \cref{fig:2d_p6_vs_cora}.
Here, it is evident that $\reachsets[\U_{\Lp[6]}]{1}$ is a tight outer approximation of the bounded-magnitude case.

\end{example}

We note that the approach used to compute reachable sets in~\cref{example:lp_norm_reach_set} is similar to the sampling-based approach for reachability in~\cite{lew_ConvexHullsReachable_2025}.
However, our approach considers the case of linear systems and signal-norm-bounded inputs, whereas theirs considers reachability of nonlinear systems driven by a disturbance drawn from an ovaloid set.
Furthermore, they assume full actuation, whereas our approach considers the underactuated case.

\subsection{Sufficient condition for inclusion in p-norm reachable set}

While sampling costates $\blambda(0)$ allows us to approximate reachable sets as in \cref{example:lp_norm_reach_set}, imposing a norm-bound requirement on $\blambda(0)$ and searching for an inner-approximation can reduce the required computational effort.
This approach relies on the following result, which presents a sufficient condition to identify points in the $\Lp$-norm reachable set.

\begin{proposition} \label{prop:sufficient_p_norm}
    Given \cref{eq:basic_linear_system}, consider $p$ even and $\bu(t)$ in the form of \cref{eq:opt_p_control_even}.
    Let $q = \frac{p}{p-1}$.
    Then, the following bound holds:\begin{equation}
        \Lpnorm{\bu(t)}^p \leq m \left( \int\limits_0^T \|\vect(e^{-\bA \tau}\bB)\|_p^q \rd \tau \right) \|\blambda(0)\|_q^q.
    \end{equation}
\end{proposition}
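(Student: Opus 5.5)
We substitute the explicit form \cref{eq:opt_p_control_even} of the control into the definition of the $\Lp$-norm and bound each component with H\"older's inequality. With $\bu(t) = (-\bB^\top e^{-\bA^\top t}\blambda(0))^{\frac{1}{p-1}}$ and $p$ even, the $i$th component satisfies $|u_i(t)|^p = |\bolds{b}_i^\top e^{-\bA^\top t}\blambda(0)|^{\frac{p}{p-1}} = |\langle e^{-\bA t}\bolds{b}_i, \blambda(0)\rangle|^{q}$. Here $\bolds{b}_i$ is the $i$th column of $\bB$, and we use that $q = \frac{p}{p-1}$ is the H\"older conjugate of $p$. Therefore
\begin{equation*}
\Lpnorm{\bu(t)}^p = \int_0^T \sum_{i=1}^m |\langle e^{-\bA \tau}\bolds{b}_i,\blambda(0)\rangle|^q \rd \tau .
\end{equation*}

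For each $i$ and each $\tau$, H\"older's inequality with exponents $(p,q)$ gives $|\langle e^{-\bA\tau}\bolds{b}_i,\blambda(0)\rangle| \le \|e^{-\bA\tau}\bolds{b}_i\|_p \|\blambda(0)\|_q$. Raising this to the power $q$ yields $|\langle e^{-\bA\tau}\bolds{b}_i,\blambda(0)\rangle|^q \le \|e^{-\bA\tau}\bolds{b}_i\|_p^q\,\|\blambda(0)\|_q^q$.

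Next we compare each column with the full matrix. The vector $e^{-\bA\tau}\bolds{b}_i$ is the $i$th column of $e^{-\bA\tau}\bB$, so its entries form a subset of the entries of $\vect(e^{-\bA\tau}\bB)$. Hence $\|e^{-\bA\tau}\bolds{b}_i\|_p \le \|\vect(e^{-\bA\tau}\bB)\|_p$, and the same holds after raising both sides to the power $q>0$. Summing the resulting $m$ identical upper bounds over $i$ produces the factor $m$. Integrating over $[0,T]$ and pulling the constant $\|\blambda(0)\|_q^q$ outside the integral gives the claimed inequality.

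No step is a real obstacle. The one point that needs care is the first: getting the exponent bookkeeping right, namely $\frac{p}{p-1}=q$, and confirming that for even $p$ the real $(p-1)$th root in \cref{eq:opt_p_control_even} is well defined and agrees with the sign/absolute-value form \cref{eq:opt_control_theorem}. This is what makes $|u_i|^p$ equal exactly $|\cdot|^q$ of the linear functional. After that, the argument is only H\"older's inequality and a crude column-versus-full-vectorization comparison, which explains the factor $m$. (One could get a sharper bound by keeping $\sum_i \|e^{-\bA\tau}\bolds{b}_i\|_p^q$, but the stated form is what is required.)
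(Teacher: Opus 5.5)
Your proof is correct and follows the same route as the paper. The paper only sketches it as a direct computation of $\Lpnorm{\bu(t)}^p$ from \eqref{eq:opt_p_control_even} followed by H\"older's inequality. Your write-up supplies exactly those steps: the exponent identity $p/(p-1)=q$, H\"older with exponents $(p,q)$ on each component, and the column-versus-$\vect$ comparison that produces the factor $m$.
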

\begin{proof}
    This proof follows from a straightforward computation of the $\Lp$-norm of $\bu(t)$ using the form in~\eqref{eq:opt_p_control_even} with an application of Holder's inequality.
\end{proof}

Using \cref{prop:sufficient_p_norm}, we can identify a subset of the interior of $\Lp$-norm reachable sets by imposing a bound on $\|\blambda(0)\|_q^q$, with $q$ defined as in \cref{prop:sufficient_p_norm}.
This is demonstrated in the following example.
\begin{example}
    Consider again system \cref{eq:example_2d_system} as in \cref{example:lp_norm_reach_set}.
    We wish to identify a subset of the interior of $\reachsets[\U_{\Lp[6]}]{1}$, which was approximated in \cref{example:lp_norm_reach_set}.
    Here, $q = 1.2$ and we simulate only trajectories corresponding to controls using $\blambda(0)$ satisfying $\|\blambda(0)\|_q^q~\leq~\left( m  \int_0^1 \|\vect(e^{-\bA \tau}\bB)\|_p^q \rd \tau  \right)^{-1}$.
    According to \cref{prop:sufficient_p_norm}, this ensures the control inputs will satisfy $\Lpnorm{\bu(t)} \leq 1$.
    The set of points identified by this approach is compared with the plots from \cref{example:lp_norm_reach_set} in \cref{fig:sufficient_p_norm_example}.
    \begin{figure}[!t]
        \centering
        \includegraphics[width=0.8\linewidth]{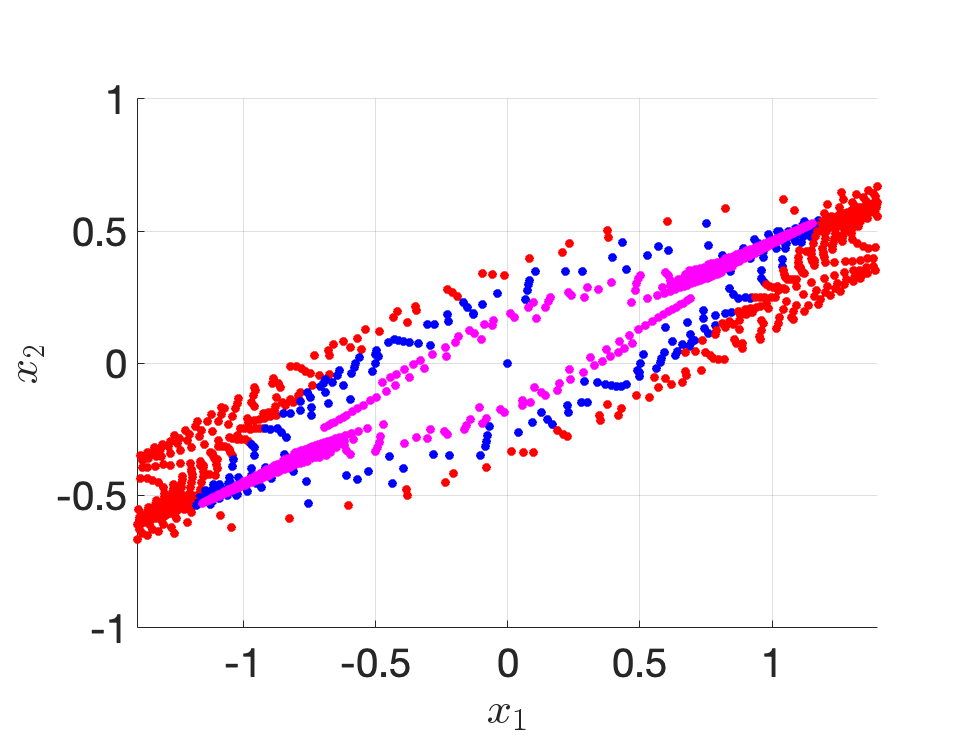}
        \caption{The magenta points show a subset of the interior of $\reachsets[\U_{\Lp[6]}]{1}$ using trajectories that satisfy a norm bound on $\blambda(0)$.
        The blue and red points are the same as in \cref{fig:2d_p6_vs_cora}.}
        \label{fig:sufficient_p_norm_example}
    \end{figure}
    Although this approach only identifies a subset of $\reachsets[\U_{\Lp[6]}]{1}$, this approach is still able to identify the farthest points along the longest axis in the reachable set, as seen in~\cref{fig:sufficient_p_norm_example}.
\end{example}

\section{Incorporating reachability into design optimization} \label{sec:example_optimizations}
In this section, we incorporate reachability analysis into a design optimization problem for a highly-maneuverable aircraft.
In the first example, we use results on $\Lp[2]$-norm reachable sets of linear systems~\cite{kalman_ContributionsTheoryOptimal_1960} and use the trace of the reachability Gramian as a constraint.
Rather than optimizing the aircraft design for a specific maneuver, we present a general formulation for a general notion of aircraft maneuverability.
In the second example, we use the procedure outlined in \cref{sec:p_norm_reach_sets} and consider the volume of the $\Lp[6]$-norm reachable set as a design constraint.
The volume of the reachable set is computed by using Scipy's implementation of QHull~\cite{barber_QuickhullAlgorithmConvex_1996}, which computes volumes of convex hulls using Delaunay triangulation.

Both examples use the same aircraft model.
The equations of motion use the following linear-longitudinal flight model from~\cite{lavretsky_RobustAdaptiveControl_2024}:  
\begin{align} \nonumber
\begin{pmatrix}
\dot{v}_T \\
\dot{\alpha} \\
\dot{q} \\
\dot{\theta}
\end{pmatrix}
=&
\begin{pmatrix}
X_V & X_\alpha & 0 & -g \cos \gamma_0 \\
\frac{Z_V}{V_0} & \frac{Z_\alpha}{V_0} & 1 + \frac{Z_q}{V_0} & -\frac{g \sin \gamma_0}{V_0} \\
M_V & M_\alpha & M_q & 0 \\
0 & 0 & 1 & 0
\end{pmatrix}
\begin{pmatrix}
v_T \\
\alpha \\
q \\
\theta
\end{pmatrix} \\ \label{eq:lavret_wise_linear_long_model}
&+
\begin{pmatrix}
X_{\delta_{\textrm{th}}} \cos \alpha_0 & X_{\delta_{\textrm{e}}} \\
- X_{\delta_{\textrm{th}}} \sin \alpha_0 & \frac{Z_{\delta_{\textrm{e}}}}{V_0} \\
M_{\delta_{\textrm{th}}} & M_{\delta_{\textrm{e}}} \\
0 & 0
\end{pmatrix}
\begin{pmatrix}
\delta_{\textrm{th}} \\
\delta_{\textrm{e}}
\end{pmatrix}.
\end{align}

For the aerodynamic model, we used the aerodynamic force and moment coefficients from~\cite{morelli_GlobalNonlinearParametric_1998}.
The coefficients in the $\bA,\bB$ matrices in \cref{eq:lavret_wise_linear_long_model} are dependent on the aircraft physical parameters and flight operating conditions, but this is omitted for brevity.

The linearization point for this model is chosen to reflect low-altitude, steady flight.
The trimmed values of angle of attack, airspeed, altitude, pitch rate, and flight path angle were set as $\alpha_0 = 12^\circ,\, V_0 = 150 \textrm{ knots},\, h_0 = 5,000 \textrm{ feet},\, q_0 = 0 \textrm{ rad/s},\, \gamma_0 = 0^\circ$ respectively.

\begin{example} \label{example:f16_L2}
    Consider the optimization problem:
    \begin{align} \nonumber
        \min_{b,\ol{c}} \quad & b + \ol{c} \\ \nonumber
        s.t. \quad & 0.5b_0 \leq b \leq 1.5 b_0 \\ \label{eq:f16_optimization_L2} 
        & 0.5 \ol{c} \leq \ol{c} \leq 1.5 \ol{c} \\ \nonumber
        & \tr (\reachgram[b,\bar{c}]{1}) \geq 1.1 \tr (\reachgram[b_0,\bar{c}_0]{1})
    \end{align}
    where $b$ and $\ol{c}$ are wingspan and wing chord.
    The reachability Gramian corresponding to the system model with design parameters $b,\bar{c}$ is denoted by $\reachgram[b,\bar{c}]{1}$.
    The initial design parameters were chosen $b_0=9.144\textrm{ m}$ and $\ol{c}_0=3.45\textrm{ m}$, as in~\cite{morelli_GlobalNonlinearParametric_1998}.
    This specific choice of objective function has the effect of minimizing the perimeter of the wing cross-section, but can be chosen as any function of the design variables.
    The first two constraints impose that the values for the wingspan and wing chord do not change by more than $50\%$ from their initial values.
    The third condition is a function of the reachability Gramian, which governs the geometry of the $\Lp[2]$-bounded-input reachable sets, and its trace is proportional to the average lengths of the principal axes of the corresponding reachable set.
    
    Solving \cref{eq:f16_optimization_L2} using sequential quadratic programming in Python's pySLSQP package~\cite{joshy_PySLSQPTransparentPython_2024} results in values $b^* = 13.62 \textrm{ m}$ and $\ol{c}^* = 3.22 \textrm{ m}$.
    Substituting this into the model and evaluating the reachability Gramian shows that $\tr(\reachgram[*]{1}) = 83.3$ is exactly a 10\% increase over $\tr(\reachgram[0]{1}) = 75.7$.
    The objective function is $16.84 \textrm{ m}$ compared to the initial value of 12.6 m.
    This means that the wing perimeter must increase by at least $33\%$ to increase the trace of the reachability Gramian by $10\%$.
\end{example}
\begin{example} \label{example:f16_L6}
    Consider the optimization problem:
    \begin{align} \nonumber
        \min_{b,\ol{c}} \quad & b + \ol{c} \\ \nonumber
        s.t. \quad & 0.5b_0 \leq b \leq 1.5 b_0 \\ \label{eq:f16_optimization_L6} 
        & 0.5 \ol{c} \leq \ol{c} \leq 1.5 \ol{c} \\ \nonumber
        & \vol( \reachsetsExp[\U_{\Lp[6]}]{1}{b,\bar{c}} ) \geq 1.1 \vol( \reachsetsExp[\U_{\Lp[6]}]{1}{b_0,\bar{c}_0}).
    \end{align}
    This optimization problem is the same as \cref{eq:f16_optimization_L2}, except the third constraint now imposes that the volume of $\reachsets[\U_{\Lp[6]}]{1}$, the $\Lp[6]$-norm reachable set corresponding to design parameters $b,\bar{c}$, is 10\% larger than at its initial volume.
    This constraint can be interpreted as increasing the volume of an approximation of the $\Lp[\infty]$ set.
    We note that since this constraint can lead to reachable sets that are more lopsided, additional constraints on eccentricity can be imposed if lopsidedness is undesirable.
    
    Once again using pySLSQP~\cite{joshy_PySLSQPTransparentPython_2024}, the solution of \cref{eq:f16_optimization_L6} is $b^* = 7.71$ m and $\bar{c}^* = 5.01$ m.
    The optimized values of wingspan and wing chord successfully increase the reachable set volume by 10\% from 0.3202 to 0.3522.
    Substituting these into the objective function shows an increase from 12.6 m to 12.7 m.
    This means that the aircraft only needs a $0.8\%$ increase in its wing perimeter to achieve a $10\%$ increase in its $\Lp[6]$ reachable set volume.
\end{example}

\section{Summary and Conclusion}
We investigated approximations and characterizations of reachable sets of linear systems under bounded-magnitude inputs.
By studying the controls that drive the system to the boundary of its reachable sets, we formulated an explicit characterization of those sets for single input, planar systems with real and distinct eigenvalues.
We also studied $\Lp$-norm reachable sets and developed a numerical method for approximating them that can be useful for approximating $\Lp[\infty]$-norm reachable sets.
Building on those results, we incorporated metrics based on reachable sets into a design optimization problem to improve aircraft maneuverability.

Generalizing our results on reachable set boundaries will be an important step in formulating reachability metrics for design optimization that are easily computable and theoretically sharp.
Moving forward, a critical next step for the work will be to extend the reachability analysis for verifying maneuvers that escape the linear regime.
Additionally, modifying the reachable sets to also account for rate-limited inputs would be valuable for capturing more realistic constraints of actuators in practice.

\bibliographystyle{IEEEtran}
\bibliography{MyReferences}

\begin{thebibliography}{10}
\providecommand{\url}[1]{#1}
\csname url@rmstyle\endcsname
\providecommand{\newblock}{\relax}
\providecommand{\bibinfo}[2]{#2}
\providecommand\BIBentrySTDinterwordspacing{\spaceskip=0pt\relax}
\providecommand\BIBentryALTinterwordstretchfactor{4}
\providecommand\BIBentryALTinterwordspacing{\spaceskip=\fontdimen2\font plus
\BIBentryALTinterwordstretchfactor\fontdimen3\font minus \fontdimen4\font\relax}
\providecommand\BIBforeignlanguage[2]{{%
\expandafter\ifx\csname l@#1\endcsname\relax
\typeout{** WARNING: IEEEtran.bst: No hyphenation pattern has been}%
\typeout{** loaded for the language `#1'. Using the pattern for}%
\typeout{** the default language instead.}%
\else
\language=\csname l@#1\endcsname
\fi
#2}}

\bibitem{martins_MultidisciplinaryDesignOptimization_2013}
J.~R. R.~A. Martins and A.~B. Lambe, ``Multidisciplinary {{Design Optimization}}: {{A Survey}} of {{Architectures}},'' \emph{AIAA Journal}, vol.~51, no.~9, pp. 2049--2075, Sept. 2013.

\bibitem{livne_IntegratedMultidisciplinarySynthesis_1990}
E.~Livne, L.~A. Schmit, and P.~P. Friedmann, ``Towards integrated multidisciplinary synthesis of actively controlled fiber composite wings,'' \emph{Journal of Aircraft}, vol.~27, no.~12, pp. 979--992, Dec. 1990.

\bibitem{grossman_IntegratedAerodynamicStructural_1988}
B.~Grossman, Z.~Gurdal, G.~J. Strauch, W.~M. Eppard, and R.~T. Haftka, ``Integrated aerodynamic/structural design of a sailplane wing,'' \emph{Journal of Aircraft}, vol.~25, no.~9, pp. 855--860, Sept. 1988.

\bibitem{manning1999large}
V.~M. Manning, ``Large-scale design of supersonic aircraft via collaborative optimization,'' Ph.D. dissertation, Stanford University, 1999.

\bibitem{garcia-sanz_ControlCoDesignEngineering_2019}
M.~Garcia-Sanz, ``Control {{Co}}-{{Design}}: {{An}} engineering game changer,'' \emph{Advanced Control for Applications}, vol.~1, no.~1, p. e18, Dec. 2019.

\bibitem{skelton_OptimalMixStructure_1992}
R.~E. Skelton and J.~H. Kim, ``The {{Optimal Mix}} of {{Structure Redesign}} and {{Active Dynamic Controllers}},'' in \emph{1992 {{American Control Conference}}}.\hskip 1em plus 0.5em minus 0.4em\relax Chicago, IL, USA: IEEE, June 1992, pp. 2775--2779.

\bibitem{gulewicz_SetBasedRobustControl_2025}
D.~Gulewicz, T.~J. Bird, H.~C. Pangborn, and N.~Jain, ``Set-{{Based Robust Control Co-Design}}: {{Application}} to a {{Hybrid Thermal Management System}},'' \emph{ASME Letters in Dynamic Systems and Control}, vol.~5, no.~3, p. 030904, July 2025.

\bibitem{bahiamonteiro_ControlRelatedMetrics_2024}
B.~Bahia~Monteiro, ``Control {{Related Metrics}} for {{Multidisciplinary Design Optimization}},'' Ph.D. dissertation, University of Michigan, 2024.

\bibitem{khot_MulticriteriaOptimizationDesign_1998}
N.~S. Khot, ``Multicriteria {{Optimization}} for {{Design}} of {{Structures}} with {{Active Control}},'' \emph{Journal of Aerospace Engineering}, vol.~11, no.~2, pp. 45--51, Apr. 1998.

\bibitem{bahiamonteiro_DesignMetricsLanding_2024}
B.~Bahia~Monteiro, I.~Kolmanovsky, and C.~E. Cesnik, ``Design {{Metrics}} for the {{Landing}} of {{Supersonic Aircraft}} under {{Stochastic Turbulence}},'' in \emph{{{AIAA SCITECH}} 2024 {{Forum}}}.\hskip 1em plus 0.5em minus 0.4em\relax Orlando, FL: {American Institute of Aeronautics and Astronautics}, Jan. 2024.

\bibitem{gupta_ControllabilityGramianControl_2020}
R.~Gupta, W.~Zhao, and R.~K. Kapania, ``Controllability {{Gramian}} as {{Control Design Objective}} in {{Aircraft Structural Design Optimization}},'' \emph{AIAA Journal}, vol.~58, no.~7, pp. 3199--3220, July 2020.

\bibitem{cunis_IntegratingNonlinearControllability_2023}
T.~Cunis, I.~Kolmanovsky, and C.~E.~S. Cesnik, ``Integrating {{Nonlinear Controllability}} into a {{Multidisciplinary Design Process}},'' \emph{Journal of Guidance, Control, and Dynamics}, vol.~46, no.~6, pp. 1026--1037, June 2023.

\bibitem{althoff_SetPropagationTechniques_2021}
M.~Althoff, G.~Frehse, and A.~Girard, ``Set {{Propagation Techniques}} for {{Reachability Analysis}},'' \emph{Annual Review of Control, Robotics, and Autonomous Systems}, vol.~4, no.~1, pp. 369--395, May 2021.

\bibitem{maler_ComputingReachableSets_2008}
O.~Maler, ``Computing {{Reachable Sets}} : {{An Introduction}},'' Tech. Rep., 2008.

\bibitem{kurzhanski_EllipsoidalTechniquesReachability_2000}
A.~B. Kurzhanski and P.~Varaiya, ``Ellipsoidal {{Techniques}} for {{Reachability Analysis}},'' in \emph{Hybrid {{Systems}}: {{Computation}} and {{Control}}}, G.~Goos, J.~Hartmanis, J.~Van~Leeuwen, N.~Lynch, and B.~H. Krogh, Eds.\hskip 1em plus 0.5em minus 0.4em\relax Berlin, Heidelberg: Springer Berlin Heidelberg, 2000, vol. 1790, pp. 202--214.

\bibitem{chutinan_ComputingPolyhedralApproximations_1998}
A.~Chutinan and B.~Krogh, ``Computing polyhedral approximations to flow pipes for dynamic systems,'' in \emph{Proceedings of the 37th {{IEEE Conference}} on {{Decision}} and {{Control}} ({{Cat}}. {{No}}.{{98CH36171}})}, vol.~2.\hskip 1em plus 0.5em minus 0.4em\relax Tampa, FL, USA: IEEE, 1998, pp. 2089--2094.

\bibitem{frehse_SpaceExScalableVerification_2011}
G.~Frehse, C.~Le~Guernic, A.~Donz{\'e}, S.~Cotton, R.~Ray, O.~Lebeltel, R.~Ripado, A.~Girard, T.~Dang, and O.~Maler, ``{{SpaceEx}}: {{Scalable Verification}} of {{Hybrid Systems}},'' in \emph{Computer {{Aided Verification}}}, G.~Gopalakrishnan and S.~Qadeer, Eds.\hskip 1em plus 0.5em minus 0.4em\relax Berlin, Heidelberg: Springer Berlin Heidelberg, 2011, vol. 6806, pp. 379--395.

\bibitem{althoff_IntroductionCORA2015_2015}
M.~Althoff, ``An {{Introduction}} to {{CORA}} 2015,'' in \emph{{{ARCH14-15}}. 1st and 2nd {{International Workshop}} on {{Applied veRification}} for {{Continuous}} and {{Hybrid Systems}}}, ser. {{EPiC Series}} in {{Computing}}, vol.~34.\hskip 1em plus 0.5em minus 0.4em\relax EasyChair, 2015, pp. 120--87.

\bibitem{bansal_HamiltonJacobiReachabilityBrief_2017}
S.~Bansal, M.~Chen, S.~Herbert, and C.~J. Tomlin, ``Hamilton-{{Jacobi}} reachability: {{A}} brief overview and recent advances,'' in \emph{2017 {{IEEE}} 56th {{Annual Conference}} on {{Decision}} and {{Control}} ({{CDC}})}.\hskip 1em plus 0.5em minus 0.4em\relax Melbourne, Australia: IEEE, Dec. 2017, pp. 2242--2253.

\bibitem{siefert_ReachabilityAnalysisUsing_2025}
J.~A. Siefert, T.~J. Bird, A.~F. Thompson, J.~J. Glunt, J.~P. Koeln, N.~Jain, and H.~C. Pangborn, ``Reachability {{Analysis Using Hybrid Zonotopes}} and {{Functional Decomposition}},'' \emph{IEEE Transactions on Automatic Control}, vol.~70, no.~7, pp. 4671--4686, July 2025.

\bibitem{bansal_DeepReachDeepLearning_2021}
S.~Bansal and C.~J. Tomlin, ``{{DeepReach}}: {{A Deep Learning Approach}} to {{High-Dimensional Reachability}},'' in \emph{2021 {{IEEE International Conference}} on {{Robotics}} and {{Automation}} ({{ICRA}})}.\hskip 1em plus 0.5em minus 0.4em\relax Xi'an, China: IEEE, May 2021, pp. 1817--1824.

\bibitem{lew_ConvexHullsReachable_2025}
T.~Lew, R.~Bonalli, and M.~Pavone, ``Convex {{Hulls}} of {{Reachable Sets}},'' \emph{IEEE Transactions on Automatic Control}, vol.~70, no.~12, pp. 8195--8209, Dec. 2025.

\bibitem{gillula_DesignGuaranteedSafe_2010}
J.~H. Gillula, {Haomiao Huang}, M.~P. Vitus, and C.~J. Tomlin, ``Design of guaranteed safe maneuvers using reachable sets: {{Autonomous}} quadrotor aerobatics in theory and practice,'' in \emph{2010 {{IEEE International Conference}} on {{Robotics}} and {{Automation}}}.\hskip 1em plus 0.5em minus 0.4em\relax Anchorage, AK: IEEE, May 2010, pp. 1649--1654.

\bibitem{porges_ReachabilityCapabilityAnalysis_2014}
O.~Porges, T.~Stouraitis, C.~Borst, and M.~A. Roa, ``Reachability and {{Capability Analysis}} for {{Manipulation Tasks}},'' in \emph{{{ROBOT2013}}: {{First Iberian Robotics Conference}}}, M.~A. Armada, A.~Sanfeliu, and M.~Ferre, Eds.\hskip 1em plus 0.5em minus 0.4em\relax Cham: Springer International Publishing, 2014, vol. 253, pp. 703--718.

\bibitem{bayen_AircraftAutolanderSafety_2007}
A.~M. Bayen, I.~M. Mitchell, M.~M.~K. Oishi, and C.~J. Tomlin, ``Aircraft {{Autolander Safety Analysis Through Optimal Control-Based Reach Set Computation}},'' \emph{Journal of Guidance, Control, and Dynamics}, vol.~30, no.~1, pp. 68--77, Jan. 2007.

\bibitem{kalman_ContributionsTheoryOptimal_1960}
R.~Kalman, ``Contributions to the theory of optimal control,'' \emph{Boletin de la Sociedad Matematica Mexicana}, vol.~5, no.~2, pp. 102--119, 1960.

\bibitem{rockafellar_ConvexAnalysis_2015}
R.~T. Rockafellar, \emph{Convex {{Analysis}}}, ser. Princeton {{Landmarks}} in {{Mathematics}} and {{Physics}}.\hskip 1em plus 0.5em minus 0.4em\relax Princeton: Princeton University Press, 2015.

\bibitem{pecsvaradi_ReachableSetsLinear_1971}
T.~Pecsvaradi and K.~S. Narendra, ``Reachable sets for linear dynamical systems,'' \emph{Information and Control}, vol.~19, no.~4, pp. 319--344, Nov. 1971.

\bibitem{pontryagin_MathematicalTheoryOptimal_2018}
L.~Pontryagin, \emph{Mathematical {{Theory}} of {{Optimal Processes}}}, 1st~ed.\hskip 1em plus 0.5em minus 0.4em\relax Routledge, May 2018.

\bibitem{athans2007optimal}
M.~Athans and P.~Falb, \emph{Optimal Control: {{An}} Introduction to the Theory and Its Applications}, ser. Dover Books on Engineering.\hskip 1em plus 0.5em minus 0.4em\relax Dover Publications, 2007.

\bibitem{barber_QuickhullAlgorithmConvex_1996}
C.~B. Barber, D.~P. Dobkin, and H.~Huhdanpaa, ``The quickhull algorithm for convex hulls,'' \emph{ACM Transactions on Mathematical Software}, vol.~22, no.~4, pp. 469--483, Dec. 1996.

\bibitem{lavretsky_RobustAdaptiveControl_2024}
E.~Lavretsky and K.~A. Wise, \emph{Robust and {{Adaptive Control}}: {{With Aerospace Applications}}}, ser. Advanced {{Textbooks}} in {{Control}} and {{Signal Processing}}.\hskip 1em plus 0.5em minus 0.4em\relax Cham: Springer International Publishing, 2024.

\bibitem{morelli_GlobalNonlinearParametric_1998}
E.~Morelli, ``Global nonlinear parametric modelling with application to {{F-16}} aerodynamics,'' in \emph{Proceedings of the 1998 {{American Control Conference}}. {{ACC}}}.\hskip 1em plus 0.5em minus 0.4em\relax Philadelphia, PA, USA: IEEE, 1998, pp. 997--1001 vol.2.

\bibitem{joshy_PySLSQPTransparentPython_2024}
A.~J. Joshy and J.~T. Hwang, ``{{PySLSQP}}: {{A}} transparent {{Python}} package for the {{SLSQPoptimization}} algorithm modernized with utilities for visualization andpost-processing,'' \emph{Journal of Open Source Software}, vol.~9, no. 103, p. 7246, Nov. 2024.

\end{thebibliography}

\end{document}